\newtheorem{thm}{Theorem}[section]
\newtheorem{lem}[thm]{Lemma}
\newtheorem{cor}[thm]{Corollary}
\newtheorem{example}[thm]{Example}
\newtheorem{remark}[thm]{Remark}
\newcommand{\calI}{\mathcal{I}}
\newcommand{\calM}{\mathcal{M}}
\begin{document}

\title{Counting self-avoiding walks on free products of graphs
}


\author{Lorenz A.~Gilch}         
\address{Graz University of Technology, Institut f\"ur Mathematische Strukturtheorie, Steyrergasse 30, 8010 Graz, Austria}
\email{Lorenz.Gilch@freenet.de}

\author{Sebastian M\"uller}
\address{CNRS, Centrale Marseille, I2M, UMR 7373, 
113453 Marseille, France}
\email{sebastian.muller@univ-amu.fr}
\thanks{The research was supported by the exchange programme Amadeus-Amad\'ee $31473$TF}
\date{\today}

\keywords{connective constant, self-avoiding walk, free product of 
graphs}
\subjclass[2010]{Primary: 05C30; Secondary: 20E06, 60K35}

\begin{abstract}
The \textit{connective constant} $\mu(G)$ of a graph $G$ is the asymptotic growth rate of the number $\sigma_{n}$ of self-avoiding walks of length $n$ in $G$ from a given vertex. We prove a formula for the connective constant for free products of quasi-transitive graphs  and show that $\sigma_{n}\sim A_{G} \mu(G)^{n}$ for some constant $A_{G}$ that depends on $G$. In the case of finite products $\mu(G)$ can be calculated explicitly and is shown to be an algebraic number. 
\end{abstract}

 \maketitle

\section{Introduction}
\label{intro}
Let $G=(V,E,o)$ be a rooted graph with vertex set $V$, edge set $E$, and some distinguished
vertex $o$. The graphs considered in this note are locally finite, connected and simple, i.e.~no multiple edges between any pair of vertices. An (undirected) edge $e\in E$ with endpoints
$u,v\in V$ is noted as $e=\langle u, v\rangle$. Two vertices $u,v$ are called
\textit{adjacent} if there exists an edge $e\in E$ such that $e=\langle
u,v\rangle$; in this case we write $u\sim v$. The vertex $o\in V$ is called the root (or origin) of the graph.

A \textit{path} of length $n\in\mathbb{N}$ on a graph $G$ is a sequence of vertices $[v_0,v_1,\dots,v_n]$
such that $v_{i-1} \sim v_{i}$ for $i\in\{1,\dots,n\}$. 
An \textit{$n$-step self-avoiding walk} (SAW) on a rooted graph $G$ is a path of length $n$ starting in $o$ where no vertex appears more than once.
The \textit{connective constant} describes the asymptotic growth of the number $\sigma_{n}$ of self-avoiding walks of length $n$ on a given graph $G$; it is defined as
\begin{equation}
\mu(G)=\lim_{n\to\infty} \sigma_{n}^{1/n},
\end{equation}
provided the limit exists. In fact, if the graph is (quasi-)transitive the
existence of the limit is guaranteed by subadditivity of $\sigma_{n}$, see
Hammersley \cite{Ha:57}.  
In the case of a finite graph $\mu(G)$ equals trivially zero. In the following
we will just write $\mu=\mu(G)$.

Self-avoiding walks were originally introduced on Euclidean lattices as a model
for long polymer chains. But in recent years, the study on hyperbolic lattices,
see Madras and Wu \cite{MaWu:05}, Swierczak and Guttmann \cite{SwGu:96}, 
on one-dimensional lattices, see Alm and Janson \cite{AlJa:90}, and on general graphs and Cayley graphs, see Grimmett and Li \cite{GrLi:13b,GrLi:13,GrLi:14,GrLi:15,GrLi:15b}, has received increasing attention from physicists and mathematicians alike.
In particular, the connective constant of a $d$-regular vertex-transitive simple graph is shown to be bounded below by $\sqrt{d-1}$, see \cite{GrLi:15}.
 However, exact values of the connective constant are only known for   a small class of non-trivial graphs, namely  ladder graphs, see \cite{AlJa:90}, and the
 hexagonal lattice where the connective constant equals $\sqrt{2+\sqrt{2}}$,
 see Duminil-Copin and Smirnov \cite{DuSm:12}.
 
In the present paper we use generating functions, in particular the function $\calM (z) :=  \sum_{n\geq 0} \sigma_n\cdot z^n,$  to give formulas for the connective constant for the class of  free products of graphs.
Generating functions already played an important role in the theory of
self-avoiding walks, e.g. see  \cite{AlJa:90} and Bauerschmidt, Duminil-Copin,
Goodman and Slade \cite{BaDuGoSl:12}. The approach of this note is that the involved generating functions satisfy a functional equation of the same type as some generating functions for the ordinary random walk. As a consequence the connective constant can be expressed as the smallest positive root of an equation, see Theorem \ref{thm:connective constant}.    In the case of  products of finite graphs this equation can be solved explicitly and it is shown that $\mu$ is an algebraic number, see Corollary \ref{cor:algebraic}.

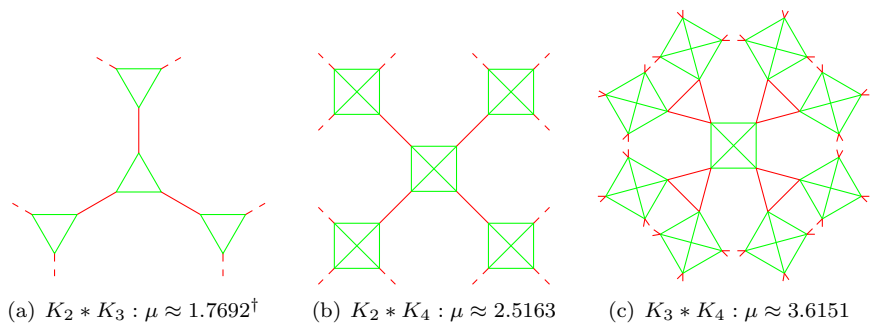
\begin{figure}[!h]
\centering
\subfigure[\mbox{$K_{2} * K_{3}: \mu\approx 1.7692^\dag$}]{
\begin{tikzpicture}[scale = 0.3]
\draw[green] (0,0) -- (2,0) -- (1,{sqrt(3)}) -- (0,0); 


\begin{scope}[shift={(2,0)},rotate=-30]
\draw[red] (0,0) -- (2,0) node (A){}; 
\end{scope}

\begin{scope}[shift={(0,0)},rotate=210]
\draw[red] (0,0) -- (2,0) node (B){}; 
\end{scope}

\begin{scope}[shift={(1,{sqrt(3)}) },rotate=90]
\draw[red] (0,0) -- (2,0);
\end{scope}


\begin{scope}[shift={(1,{2+sqrt(3)}) },rotate=60]
\draw[green] (0,0) -- (2,0) node (C1){}  -- (1,{sqrt(3)}) node (C2){}  -- (0,0); 
\end{scope}

\begin{scope}[shift={(A) },rotate=-60]
\draw[green] (0,0) -- (2,0) node (A1){} -- (1,{sqrt(3)}) node(A2){} -- (0,0); 
\end{scope}

\begin{scope}[shift={(B) },rotate=180]
\draw[green] (0,0) -- (2,0) node (B1){}  -- (1,{sqrt(3)}) node (B2){}  -- (0,0); 
\end{scope}


\begin{scope}[shift={(A1)},rotate=-90]
\draw[red, dashed] (0,0) -- (1,0) node (AA){}; 
\end{scope}

\begin{scope}[shift={(A2)},rotate=30]
\draw[red, dashed] (0,0) -- (1,0) node (AB){}; 
\end{scope}

\begin{scope}[shift={(B1)},rotate=150]
\draw[red, dashed] (0,0) -- (1,0) node (BA){}; 
\end{scope}

\begin{scope}[shift={(B2)},rotate=-90]
\draw[red, dashed] (0,0) -- (1,0) node (BB){}; 
\end{scope}

\begin{scope}[shift={(C1)},rotate=30]
\draw[red, dashed] (0,0) -- (1,0) node (CA){}; 
\end{scope}

\begin{scope}[shift={(C2)},rotate=150]
\draw[red, dashed] (0,0) -- (1,0) node (CB){}; 
\end{scope}
\end{tikzpicture}}
\quad
\subfigure[\mbox{$K_{2}\ast K_{4}: \mu\approx 2.5163$}]{

\begin{tikzpicture}[scale = 0.3]
\draw[green] (0,0) -- (2,0) -- (2,2) -- (0,2) -- (0,0); 
\draw[green] (0,0) -- (2,2);
\draw[green]  (2,0) -- (0,2); 


\begin{scope}[shift={(0,0)},rotate=-135]
\draw[red] (0,0) -- (2,0) node (A){}; 
\end{scope}

\begin{scope}[shift={(2,0)},rotate=-45]
\draw[red] (0,0) -- (2,0) node (B){}; 
\end{scope}

\begin{scope}[shift={(2,2) },rotate=45]
\draw[red] (0,0) -- (2,0) node (C){};
\end{scope}

\begin{scope}[shift={(0,2) },rotate=135]
\draw[red] (0,0) -- (2,0) node (D){};
\end{scope}


\begin{scope}[shift={(A) },rotate=180]
\draw[green] (0,0) -- (2,0) node (AB){} -- (2,2) node (AC){}-- (0,2) node (AD){} -- (0,0); 
\draw[green] (0,0) -- (2,2);
\draw[green]  (2,0) -- (0,2); 
\end{scope}

\begin{scope}[shift={(B) },rotate=-90]
\draw[green] (0,0) -- (2,0) node (BB){} -- (2,2) node (BC){}-- (0,2) node (BD){} -- (0,0); 
\draw[green] (0,0) -- (2,2);
\draw[green]  (2,0) -- (0,2); 
\end{scope}

\begin{scope}[shift={(C) },rotate=0]
\draw[green] (0,0) -- (2,0) node (CB){} -- (2,2) node (CC){}-- (0,2) node (CD){} -- (0,0); 
\draw[green] (0,0) -- (2,2);
\draw[green]  (2,0) -- (0,2); 
\end{scope}

\begin{scope}[shift={(D) },rotate=90]
\draw[green] (0,0) -- (2,0) node (DB){} -- (2,2) node (DC){}-- (0,2) node (DD){} -- (0,0); 
\draw[green] (0,0) -- (2,2);
\draw[green]  (2,0) -- (0,2); 
\end{scope}

\begin{scope}[shift={(AB)},rotate=135]
\draw[red, dashed] (0,0) -- (1,0) ; 
\end{scope}

\begin{scope}[shift={(AC)},rotate=-135]
\draw[red, dashed] (0,0) -- (1,0); 
\end{scope}

\begin{scope}[shift={(AD)},rotate=-45]
\draw[red, dashed] (0,0) -- (1,0); 
\end{scope}

\begin{scope}[shift={(BB)},rotate=-135]
\draw[red, dashed] (0,0) -- (1,0) ; 
\end{scope}

\begin{scope}[shift={(BC)},rotate=-45]
\draw[red, dashed] (0,0) -- (1,0); 
\end{scope}

\begin{scope}[shift={(BD)},rotate=45]
\draw[red, dashed] (0,0) -- (1,0); 
\end{scope}

\begin{scope}[shift={(CB)},rotate=-45]
\draw[red, dashed] (0,0) -- (1,0) ; 
\end{scope}

\begin{scope}[shift={(CC)},rotate=45]
\draw[red, dashed] (0,0) -- (1,0); 
\end{scope}

\begin{scope}[shift={(CD)},rotate=135]
\draw[red, dashed] (0,0) -- (1,0); 
\end{scope}

\begin{scope}[shift={(DB)},rotate=45]
\draw[red, dashed] (0,0) -- (1,0) ; 
\end{scope}

\begin{scope}[shift={(DC)},rotate=135]
\draw[red, dashed] (0,0) -- (1,0); 
\end{scope}

\begin{scope}[shift={(DD)},rotate=-135]
\draw[red, dashed] (0,0) -- (1,0); 
\end{scope}
\end{tikzpicture}

}\quad
\subfigure[${K}_{3}* K_{4}: \mu\approx 3.6151$]{

\begin{tikzpicture}[scale = 0.3]
\draw[green] (0,0) -- (2,0) -- (2,2) -- (0,2) -- (0,0); 
\draw[green] (0,0) -- (2,2);
\draw[green]  (2,0) -- (0,2); 


\begin{scope}[shift={(0,0)},rotate=-165]
\draw[red] (0,0) -- (2,0) node (A1){}  -- (1,{sqrt(3)}) node (A2){}  -- (0,0); 
\end{scope}

\begin{scope}[shift={(2,0)},rotate=-75]
\draw[red] (0,0) -- (2,0) node (B1){}  -- (1,{sqrt(3)}) node (B2){}  -- (0,0); 
\end{scope}

\begin{scope}[shift={(2,2)},rotate=15]
\draw[red] (0,0) -- (2,0) node (C1){}  -- (1,{sqrt(3)}) node (C2){}  -- (0,0); 
\end{scope}

\begin{scope}[shift={(0,2)},rotate=105]
\draw[red] (0,0) -- (2,0) node (D1){}  -- (1,{sqrt(3)}) node (D2){}  -- (0,0); 
\end{scope}


\begin{scope}[shift={(A1) },rotate=150]
\draw[green] (0,0) -- (2,0) node (A1B){} -- (2,2) node (A1C){}-- (0,2) node (A1D){} -- (0,0); 
\draw[green] (0,0) -- (2,2);
\draw[green]  (2,0) -- (0,2); 
\end{scope}

\begin{scope}[shift={(A2) },rotate=-150]
\draw[green] (0,0) -- (2,0) node (A2B){} -- (2,2) node (A2C){}-- (0,2) node (A2D){} -- (0,0); 
\draw[green] (0,0) -- (2,2);
\draw[green]  (2,0) -- (0,2); 
\end{scope}

\begin{scope}[shift={(D1) },rotate=60]
\draw[green] (0,0) -- (2,0) node (D1B){} -- (2,2) node (D1C){}-- (0,2) node (D1D){} -- (0,0); 
\draw[green] (0,0) -- (2,2);
\draw[green]  (2,0) -- (0,2); 
\end{scope}

\begin{scope}[shift={(D2) },rotate=120]
\draw[green] (0,0) -- (2,0) node (D2B){} -- (2,2) node (D2C){}-- (0,2) node (D2D){} -- (0,0); 
\draw[green] (0,0) -- (2,2);
\draw[green]  (2,0) -- (0,2); 
\end{scope}

\begin{scope}[shift={(B1) },rotate=-120]
\draw[green] (0,0) -- (2,0) node (B1B){} -- (2,2) node (B1C){}-- (0,2) node (B1D){} -- (0,0); 
\draw[green] (0,0) -- (2,2);
\draw[green]  (2,0) -- (0,2); 
\end{scope}

\begin{scope}[shift={(B2) },rotate=-60]
\draw[green] (0,0) -- (2,0) node (B2B){} -- (2,2) node (B2C){}-- (0,2) node (B2D){} -- (0,0); 
\draw[green] (0,0) -- (2,2);
\draw[green]  (2,0) -- (0,2); 
\end{scope}

\begin{scope}[shift={(C1) },rotate=-30]
\draw[green] (0,0) -- (2,0) node (C1B){} -- (2,2) node (C1C){}-- (0,2) node (C1D){} -- (0,0); 
\draw[green] (0,0) -- (2,2);
\draw[green]  (2,0) -- (0,2); 
\end{scope}

\begin{scope}[shift={(C2) },rotate=30]
\draw[green] (0,0) -- (2,0) node (C2B){} -- (2,2) node (C2C){}-- (0,2) node (C2D){} -- (0,0); 
\draw[green] (0,0) -- (2,2);
\draw[green]  (2,0) -- (0,2); 
\end{scope}


\begin{scope}[shift={(A1B) },rotate=90]
\draw[red, dashed] (0,0) -- ({1/2},0);
\draw[red, dashed] (0,0) -- ({1/sqrt(8)},{1/sqrt(8)});
\end{scope}

\begin{scope}[shift={(A1C) },rotate=180]
\draw[red, dashed] (0,0) -- ({1/2},0);
\draw[red, dashed] (0,0) -- ({1/sqrt(8)},{1/sqrt(8)});
\end{scope}

\begin{scope}[shift={(A1D) },rotate=-100]
\draw[red, dashed] (0,0) -- ({1/2},0);
\draw[red, dashed] (0,0) -- ({1/sqrt(8)},{1/sqrt(8)});
\end{scope}

\begin{scope}[shift={(A2B) },rotate=145]
\draw[red, dashed] (0,0) -- ({1/2},0);
\draw[red, dashed] (0,0) -- ({1/sqrt(8)},{1/sqrt(8)});
\end{scope}

\begin{scope}[shift={(A2C) },rotate=-135]
\draw[red, dashed] (0,0) -- ({1/2},0);
\draw[red, dashed] (0,0) -- ({1/sqrt(8)},{1/sqrt(8)});
\end{scope}

\begin{scope}[shift={(A2D) },rotate=-45]
\draw[red, dashed] (0,0) -- ({1/2},0);
\draw[red, dashed] (0,0) -- ({1/sqrt(8)},{1/sqrt(8)});
\end{scope}

\begin{scope}[shift={(B1B) },rotate=180]
\draw[red, dashed] (0,0) -- ({1/2},0);
\draw[red, dashed] (0,0) -- ({1/sqrt(8)},{1/sqrt(8)});
\end{scope}

\begin{scope}[shift={(B1C) },rotate=-100]
\draw[red, dashed] (0,0) -- ({1/2},0);
\draw[red, dashed] (0,0) -- ({1/sqrt(8)},{1/sqrt(8)});
\end{scope}

\begin{scope}[shift={(B1D) },rotate=0]
\draw[red, dashed] (0,0) -- ({1/2},0);
\draw[red, dashed] (0,0) -- ({1/sqrt(8)},{1/sqrt(8)});
\end{scope}

\begin{scope}[shift={(B2B) },rotate=-135]
\draw[red, dashed] (0,0) -- ({1/2},0);
\draw[red, dashed] (0,0) -- ({1/sqrt(8)},{1/sqrt(8)});
\end{scope}

\begin{scope}[shift={(B2C) },rotate=-45]
\draw[red, dashed] (0,0) -- ({1/2},0);
\draw[red, dashed] (0,0) -- ({1/sqrt(8)},{1/sqrt(8)});
\end{scope}

\begin{scope}[shift={(B2D) },rotate=60]
\draw[red, dashed] (0,0) -- ({1/2},0);
\draw[red, dashed] (0,0) -- ({1/sqrt(8)},{1/sqrt(8)});
\end{scope}

\begin{scope}[shift={(C1B) },rotate=-105]
\draw[red, dashed] (0,0) -- ({1/2},0);
\draw[red, dashed] (0,0) -- ({1/sqrt(8)},{1/sqrt(8)});
\end{scope}

\begin{scope}[shift={(C1C) },rotate=0]
\draw[red, dashed] (0,0) -- ({1/2},0);
\draw[red, dashed] (0,0) -- ({1/sqrt(8)},{1/sqrt(8)});
\end{scope}

\begin{scope}[shift={(C1D) },rotate=90]
\draw[red, dashed] (0,0) -- ({1/2},0);
\draw[red, dashed] (0,0) -- ({1/sqrt(8)},{1/sqrt(8)});
\end{scope}

\begin{scope}[shift={(C2B) },rotate=-45]
\draw[red, dashed] (0,0) -- ({1/2},0);
\draw[red, dashed] (0,0) -- ({1/sqrt(8)},{1/sqrt(8)});
\end{scope}

\begin{scope}[shift={(C2C) },rotate=60]
\draw[red, dashed] (0,0) -- ({1/2},0);
\draw[red, dashed] (0,0) -- ({1/sqrt(8)},{1/sqrt(8)});
\end{scope}

\begin{scope}[shift={(C2D) },rotate=135]
\draw[red, dashed] (0,0) -- ({1/2},0);
\draw[red, dashed] (0,0) -- ({1/sqrt(8)},{1/sqrt(8)});
\end{scope}

\begin{scope}[shift={(D1B) },rotate=0]
\draw[red, dashed] (0,0) -- ({1/2},0);
\draw[red, dashed] (0,0) -- ({1/sqrt(8)},{1/sqrt(8)});
\end{scope}

\begin{scope}[shift={(D1C) },rotate=90]
\draw[red, dashed] (0,0) -- ({1/2},0);
\draw[red, dashed] (0,0) -- ({1/sqrt(8)},{1/sqrt(8)});
\end{scope}

\begin{scope}[shift={(D1D) },rotate=180]
\draw[red, dashed] (0,0) -- ({1/2},0);
\draw[red, dashed] (0,0) -- ({1/sqrt(8)},{1/sqrt(8)});
\end{scope}

\begin{scope}[shift={(D2B) },rotate=45]
\draw[red, dashed] (0,0) -- ({1/2},0);
\draw[red, dashed] (0,0) -- ({1/sqrt(8)},{1/sqrt(8)});
\end{scope}

\begin{scope}[shift={(D2C) },rotate=150]
\draw[red, dashed] (0,0) -- ({1/2},0);
\draw[red, dashed] (0,0) -- ({1/sqrt(8)},{1/sqrt(8)});
\end{scope}

\begin{scope}[shift={(D2D) },rotate=-135]
\draw[red, dashed] (0,0) -- ({1/2},0);
\draw[red, dashed] (0,0) -- ({1/sqrt(8)},{1/sqrt(8)});
\end{scope}

\end{tikzpicture}}
\caption{Examples for free products of complete graphs $K_{n}$.}\label{fig:examples}
\end{figure}

\footnotetext{$^\dag$ The exact value of $\mu$ is $6 \Bigl(-2 + \sqrt[3]{46 - 6\sqrt{57}} + \sqrt[3]{46 + 6 \sqrt{57})}\Bigr)^{-1}$}

It is widely believed that on $\mathbb{Z}^{d}, d\neq 4,$ there is a critical exponent $\gamma$ depending on $d$ such that  $\sigma_{n}\sim A \mu^{n} n^{1-\gamma}$, where $A$ is a constant depending on $d$. However, this behaviour is rigorously proven only on lattices in dimensions $d\geq 5$, see Hara and Slade \cite{HaSl:92}, on ladder graphs, see \cite{AlJa:90}, and (in a weaker form) on regular tessellations of the hyperbolic lattice, see \cite{MaWu:05}. In this note we prove  that $\sigma_{n}\sim  A\mu(G)^{n} n^{1-\gamma}$ with $\gamma=1$ for free products of graphs, see Theorem \ref{thm:convergence-type}. This result supports the conjecture evoked in \cite{MaWu:05} that  SAW on  nonamenable graphs exhibits a mean-field behavior, i.e., that $\gamma=1$.

The study of stochastic processes on free products has a long and fruitful
history. In most of the works  generating function techniques played an
important role, e.g.~Woess \cite{Wo:00}. Similar techniques we use for rewriting
generating functions in terms of functions on the factors of the free product
were introduced independently and simultaneously in  Cartwright and Soardi
\cite{cartwright-soardi}, McLaughlin \cite{mclaughlin}, Voiculescu
\cite{voiculescu} and Woess \cite{woess3}. Free products owe some of their
importance Stalling's Splitting Theorem which states that a finitely generated
group has more than one (geometric) end if and only if it admits a nontrivial decomposition as an amalgamated free product or an HNN-extension over a finite subgroup. Furthermore, they constitute an important class of nonamenable graphs where calculations are still possible while they remain completely open on one-ended nonamenable graphs. For example, the spectral radius  of random walks, see \cite{Wo:00}, and the critical percolation probability $p_{c}$, see {\v{S}}pakulov{{\'a}} \cite{Ko:08,Sp:09}, can be calculated on free products but still constitute a big challenge on one-ended nonamenable graphs.

\section{Free products of graphs}

Let $r\in\mathbb{N}$ with $r\geq 2$ and set $\mathcal{I}:=\{1,\dots,r\}$.
Let $G_1=(V_1,E_1,o_1),\dots, G_r=(V_r,E_r,o_r)$ be a finite family of
undirected, connected, quasi-transitive, rooted
graphs with vertex sets $V_i$, edge sets $E_i$ and roots $o_i$ for $1\leq i\leq r$. 
We recall that a graph is called \textit{quasi-transitive} if its automorphism group acts quasi-transitive, i.e.~with finitely many orbits. In particular, every finite graph is quasi-transitive. Furthermore, we shall assume that we have $|V_i|\geq 2$ for every $i\in\mathcal{I}$ and that the vertex sets are distinct.
\par
Let $V_i^\times := V_i\setminus\{o_i\}$ for every $i\in\mathcal{I}$ and set
$\tau(x):=i$ if $x\in V_i^\times$. Define 
$$
V:=V_1\ast \dots \ast V_r =\{ x_1x_2\dots x_n \mid n\in\mathbb{N},   x_i\in
\bigcup_{j\in\mathcal{I}} V_j^\times,  \tau(x_i)\neq \tau(x_{i+1})\}\cup \{o\},
$$
which is the set of `words' over the alphabet $\bigcup_{i\in\mathcal{I}}V_i^\times $ such that no two consecutive letters come from the same
  $V_i^\times$. The empty word in $V$ is
  denoted by $o$. We extend the function $\tau$ on $V$ by setting $\tau(x_1\dots
  x_n):=\tau(x_n)$ for $x_1x_2\dots x_n\in V$. On the set $V$ we have a partial
  word composition law: if $x=x_1\dots x_m,y=y_1\dots y_n\in V$ with $\tau(x_m)\neq
  \tau(y_1)$ then $xy$ stands for the concatenation of $x$ and $y$, which is
  again an element of $V$. In particular, if $\tau(x)\neq i\in\mathcal{I}$ then
  we set $xo_i:=o_ix:=x$ and $xo:=ox:=x$. We regard
each $V_i$ as a subset of $V$, identifying each $o_i$ with $o$.
\par
We now equip the set $V$ with a
  graph structure by defining the set of edges $E$ as follows: if $i\in\mathcal{I}$
  and $x,y\in V_i$ with $x \sim y$ then $wx \sim wy$ for all $w\in V$
  with $\tau(w)\neq i$. The
\textit{free product} of the graphs
$G_1,\dots, G_r$ is then given by the graph 
$$
G:=G_1\ast G_2 \ast \dots \ast G_r := (V,E,o).
$$
In order to visualize this graph take a copy of each graph $G_1,\dots,G_r$ and
glue them together at their roots $o_1,\dots,o_r$, that is, we identify the
 roots of the single graphs as one common vertex in $V$, which becomes
 $o$. Inductively, at each element $x\in V$ with $\tau(x)=i\in\mathcal{I}$
 attach copies of the graphs $G_j$, $j\neq i$, where $x$ is identified with the roots $o_j$, $j\neq i$, of each single graph $G_j$.
\par
For $i\in\calI$ and $n\in\mathbb{N}$, let $\sigma_n^{(i)}$ be the number of self-avoiding walks of
length $n$ on $G_i$ starting at $o_i$. Since all graphs $G_i$ are
quasi-transitive there are, due to \cite{Ha:57},  numbers $\mu_i$ such that
$$
\mu_i=\lim_{n\to\infty} \bigl(\sigma_n^{(i)}\bigr)^{1/n}.
$$

\section{Functional equation for generating functions}
We turn to self-avoiding walks on the free product $G$.
Let $\sigma_n$  be the number of self-avoiding walks of
length $n\in\mathbb{N}$ on the free product $G$ starting at $o$. We set $\sigma_0:=1$.

For $i\in\calI$ and $z\in\mathbb{C}$, define the generating functions
\begin{eqnarray*}
\calM (z) &:= & \sum_{n\geq 0} \sigma_n\cdot z^n,\\
\calM_i (z) &:= & \sum_{n\geq 1} \sigma_n^{(i)}\cdot z^n.
\end{eqnarray*}
Furthermore, let $\bar\sigma_{n}^{(i)}$ be the number of self-avoiding walks of
length $n$ on the free product $G$ starting at $o$, which do visit $V_i^\times$.
Define
$$
\calM_i^\ast (z) := \sum_{n\geq 1} \bar\sigma_{n}^{(i)}\cdot z^n. 
$$
Due to the recursive structure of free products we can rewrite
\begin{eqnarray}
\calM_i^\ast (z) &=& \sum_{m\geq 0} \mathcal{M}_i(z) 
\sum_{\substack{i_1,\dots,i_m\in\mathcal{I}:\\ i_1\neq i,i_j\neq i_{j+1}}}\prod_{j=1}^m\mathcal{M}_{i_j}(z) \nonumber\\
&=& \mathcal{M}_i(z)\Bigl(1+\sum_{j\in\mathcal{I}\setminus\{i\}}
\mathcal{M}_j^\ast(z)\Bigr).\label{equ:M-i-ast}
\end{eqnarray}
Furthermore, we also have
\begin{equation}\label{equ:M}
\calM (z) = 1+ \sum_{i\in\mathcal{I}} \mathcal{M}_i^\ast(z).
\end{equation}
Plugging (\ref{equ:M}) into (\ref{equ:M-i-ast}) yields
$$
\mathcal{M}_i^\ast(z)= \mathcal{M}_i(z) \bigr(\mathcal{M}(z)-\mathcal{M}_i^\ast(z)\bigr),
$$
which in turn yields
$$
\mathcal{M}_i^\ast(z)=\mathcal{M}(z) \frac{\mathcal{M}_i(z)}{1+\mathcal{M}_i(z)}.
$$
Hence,
$$
\mathcal{M}(z)=1+\sum_{i\in\mathcal{I}}\mathcal{M}_i^\ast(z)= 1+ \mathcal{M}(z)\sum_{i\in\mathcal{I}} \frac{\mathcal{M}_i(z)}{1+\mathcal{M}_i(z)},
$$
or equivalently
\begin{equation}\label{equ:M-equation}
\mathcal{M}(z)=\frac{1}{1-\sum_{i\in\mathcal{I}} \frac{\mathcal{M}_i(z)}{1+\mathcal{M}_i(z)}}.
\end{equation}
The above equations do hold for every $z\in\mathbb{C}$ with $|z|<R(\mathcal{M})$ and $1+\mathcal{M}_i(z)\neq 0$, where
$R(\mathcal{M})$ is the radius of convergence of $\mathcal{M}(z)$.
\begin{remark}
Analogous functional equations as in (\ref{equ:M-equation}) appear in
different context for random walks on free groups, where one has some
generating functions on the single factors and deduces the corresponding
generating function on the free product. In Gilch \cite[Section
4]{Gi:07}, a functional equation of the same type was established in
order to compute the rate of escape of random walks on free products. In that
case the functions $\mathcal{M}_i(z)$ and $\mathcal{M}(z)$ are replaced by
probability generating functions. Furthermore, in Candellero, Gilch and
M\"uller \cite[Equation (4.1)]{CGM:12} a functional equation of the same type was also established in order to compute the Hausdorff dimension of branching random walks on free products; in this case the involved generating functions describe the evolution of particles. 
\end{remark}

We have  
$$
1\leq \limsup_{m\to\infty} \sigma_n^{1/n} = 1/R(\mathcal{M}),
$$ 
and therefore that $R(\mathcal{M})\leq 1$. In order to determine $R(\mathcal{M})$ 
we have to find -- by Prings\-heim's Theorem -- the smallest
singularity point on the positive $x$-axis of $\mathcal{M}(z)$. This
smallest singularity point is either one of  the radii of convergence
$R(\mathcal{M}_i)$ of the
functions $\mathcal{M}_i(z)$ or the smallest real positive number
$z_\ast$ with 
\begin{equation*}
\sum_{i\in\mathcal{I}}
\frac{\mathcal{M}_i(z_\ast)}{1+\mathcal{M}_i(z_\ast)}=1.
\end{equation*}
That is, $R(\mathcal{M}) = \min\bigl\lbrace
R(\mathcal{M}_1),\dots,R(\mathcal{M}_r),z_\ast \bigr\rbrace$. The next lemma
 answers which of the candidates is the radius of convergence of $\mathcal{M}(z)$:
\begin{lem}\label{lemma:z_ast}
$R(\mathcal{F})=z_\ast<\min \{R(\mathcal{M}_1),\dots,R(\mathcal{M}_r)\}$.
\end{lem}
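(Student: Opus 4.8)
The plan is to prove that the function $h(z):=\sum_{i\in\mathcal{I}}\mathcal{M}_i(z)/(1+\mathcal{M}_i(z))$ attains the value $1$ strictly inside the common disc of convergence, i.e.\ at some $z_\ast<\rho$, where $\rho:=\min\{R(\mathcal{M}_1),\dots,R(\mathcal{M}_r)\}$. First I would record the elementary monotonicity facts: each $\mathcal{M}_i$ has non-negative coefficients and $\mathcal{M}_i(0)=0$, so on $[0,\rho)$ it is continuous, strictly increasing, and starts at $0$; since $x\mapsto x/(1+x)$ is continuous and strictly increasing on $[0,\infty)$, the same holds for $h$, with $h(0)=0$. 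Consequently it suffices to show $\sup_{0\le z<\rho}h(z)>1$: the intermediate value theorem then yields a unique $z_\ast\in(0,\rho)$ with $h(z_\ast)=1$, and by strict monotonicity this is exactly the smallest positive solution of $\sum_{i}\mathcal{M}_i(z)/(1+\mathcal{M}_i(z))=1$ appearing in the discussion preceding the lemma. Feeding $z_\ast<\rho$ into the already-established identity $R(\mathcal{M})=\min\{R(\mathcal{M}_1),\dots,R(\mathcal{M}_r),z_\ast\}$ then gives $R(\mathcal{M})=z_\ast<\rho$, as claimed.

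The heart of the matter is the claim that $\lim_{z\uparrow R(\mathcal{M}_i)}\mathcal{M}_i(z)=+\infty$ for every $i\in\mathcal{I}$, equivalently that $\mathcal{M}_i(z)/(1+\mathcal{M}_i(z))\to 1$ as $z\uparrow R(\mathcal{M}_i)$. I would split into two cases. If $G_i$ is finite, then $\mathcal{M}_i$ is a polynomial with non-negative coefficients, non-constant because $G_i$ is connected with $|V_i|\ge 2$ (so $\sigma_1^{(i)}=\deg(o_i)\ge 1$); hence $R(\mathcal{M}_i)=\infty$ and $\mathcal{M}_i(z)\to\infty$ as $z\to\infty$. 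If $G_i$ is infinite, then as an infinite connected locally finite graph it contains a ray, so $\sigma_n^{(i)}\ge 1$ for all $n$ and $\mu_i\ge 1$; moreover, since $G_i$ is quasi-transitive there is a constant $C_i\ge 1$ with $\sigma_{m+n}^{(i)}\le C_i\,\sigma_m^{(i)}\sigma_n^{(i)}$ for all $m,n$ — this near-submultiplicativity is precisely the input to Hammersley's theorem \cite{Ha:57} that makes the limit $\mu_i$ exist — so a Fekete-type argument gives $\sigma_n^{(i)}\ge c_i\,\mu_i^n$ for some $c_i>0$. Since $R(\mathcal{M}_i)=1/\mu_i$, this yields $\mathcal{M}_i(z)\ge c_i\sum_{n\ge 1}(\mu_i z)^n=c_i\,\mu_i z/(1-\mu_i z)\to\infty$ as $z\uparrow 1/\mu_i=R(\mathcal{M}_i)$.

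Granting this, the proof finishes quickly: choose an index $k$ with $R(\mathcal{M}_k)=\rho$ and any $j\neq k$ (available since $r\ge 2$); as $z\uparrow\rho$ the term $\mathcal{M}_k(z)/(1+\mathcal{M}_k(z))\to 1$, while $\mathcal{M}_j(z)/(1+\mathcal{M}_j(z))$ is positive and non-decreasing, hence $\ge\delta:=\mathcal{M}_j(z_0)/(1+\mathcal{M}_j(z_0))>0$ on $[z_0,\rho)$ for a fixed $z_0\in(0,\rho)$. Therefore $\sup_{0\le z<\rho}h(z)\ge 1+\delta>1$, and the first paragraph applies. I expect the only genuinely non-routine ingredient to be the blow-up $\mathcal{M}_i(z)\to\infty$ at $z=R(\mathcal{M}_i)$ in the infinite case — that is, the lower bound $\sigma_n^{(i)}\ge c_i\mu_i^n$ — since this is where quasi-transitivity of the factors is really used; the rest is monotonicity plus the intermediate value theorem.
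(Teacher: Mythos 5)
Your proof is correct, but it takes a genuinely different route from the one in the paper. The paper never analyses the function $h(z)=\sum_{i}\mathcal{M}_i(z)/(1+\mathcal{M}_i(z))$ directly: it assumes w.l.o.g.\ that $R(\mathcal{M}_1)=\min_i R(\mathcal{M}_i)<\infty$, invokes Hammersley's theorem in the form $\sigma_n^{(1)}\ge\mu_1^n$, and then constructs by hand a large family of self-avoiding walks in the free product (excursions into fresh copies of $G_1$ separated by single steps in a copy of $G_2$); a stars-and-bars count together with the binomial theorem gives $\sigma_n\ge\mu_1^{n-1}\bigl(1+\mu_1^{-1}\bigr)^{\lfloor n/2\rfloor-1}$, hence $\liminf_n\sigma_n^{1/n}\ge\mu_1\sqrt{1+\mu_1^{-1}}>\mu_1$, so $R(\mathcal{M})<\min_i R(\mathcal{M}_i)$ and the only remaining candidate in $R(\mathcal{M})=\min\{R(\mathcal{M}_1),\dots,R(\mathcal{M}_r),z_\ast\}$ is $z_\ast$. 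You instead stay entirely at the level of the generating functions: you show that the factor with the smallest radius of convergence blows up at that radius, so that $h$ crosses the value $1$ strictly before $\min_i R(\mathcal{M}_i)$, and conclude by monotonicity and the intermediate value theorem. Both arguments rest on the same external input, a lower bound $\sigma_n^{(i)}\ge c_i\mu_i^n$ coming from quasi-transitivity, but yours is more economical (no combinatorics in the product graph) and has the merit of explicitly establishing that $z_\ast$ exists, which the displayed identity for $R(\mathcal{M})$ quietly presupposes; what the paper's computation buys in exchange is the concrete quantitative bound $\mu(G)\ge\mu_1\sqrt{1+\mu_1^{-1}}$ relating the connective constant of the product to that of a factor.

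One step worth tightening: you derive $\sigma_n^{(i)}\ge c_i\mu_i^n$ from an asserted near-submultiplicativity $\sigma_{m+n}^{(i)}\le C_i\,\sigma_m^{(i)}\sigma_n^{(i)}$. For quasi-transitive (as opposed to transitive) graphs this is not immediate: splitting a walk at time $m$ lands you at an arbitrary vertex $v$, and one still has to compare $\sigma_n^{(i)}$ counted from $v$ with $\sigma_n^{(i)}$ counted from $o_i$, uniformly over orbit representatives. The paper sidesteps this by quoting Hammersley's conclusion directly, namely $\log\mu_i=\inf_n n^{-1}\log\sigma_n^{(i)}$ for quasi-transitive graphs, which yields the clean bound $\sigma_n^{(i)}\ge\mu_i^n$ and hence the divergence $\mathcal{M}_i(z)\ge\sum_{n\ge1}(\mu_i z)^n\to\infty$ as $z\uparrow 1/\mu_i$ that your argument needs. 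Citing that statement rather than re-deriving it would close the only soft spot in your write-up.
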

\begin{proof}
If all generating functions $\mathcal{M}_i(z)$ have infinite radius of
convergence (that is, $R(\mathcal{M}_i)=\infty$), then $R(\mathcal{F})=z_\ast$ since $\mathcal{M}(0)=0$ and $\lim_{z\to\infty}
\mathcal{M}_i(z)=\infty$. Therefore, w.l.o.g.~we may assume from now on that $R(\mathcal{M}_1)<\infty$.
Recall that $\mu_1=\lim_{n\to\infty} (\sigma_n^{(1)})^{1/n} =
1/R(\mathcal{M}_1)$. Due to quasi-transitivity of the graph $G_1$ we
obtain with \cite[Theorem 1]{Ha:57} that
$$
\log \mu_1 =\inf_{n\in\mathbb{N}} \frac{\log \sigma_n^{(1)}}{n},
$$
%
that is, we have $ \sigma_n^{(1)}\geq \mu_{1}^{n}$.
\par
We follow a similiar reasoning as in \cite{CGM:12}.
We now consider $\sigma_n$ and count only those SAW which make at least one step
into the copy of $G_1$, before they make exactly one step into a copy of $G_2$ and then at least one step into
a copy of $G_1$, and so on. This leads to the following estimates for
$\sigma_n$:
\begin{eqnarray*}
\sigma_n & \geq & \sum_{k=1}^{\lfloor \frac{n}{2}\rfloor}
\sum_{\substack{n_1,\dots,n_k\geq 1:\\ n_1+\dots + n_k=n-k}}
\sigma_{n_1}^{(1)}\cdot \ldots \cdot \sigma_{n_k}^{(1)} \\
&\geq & \sum_{k=1}^{\lfloor \frac{n}{2}\rfloor}
\mu_1^{n-k} \binom{n-2k+k-1}{k-1}\\
&\geq & \sum_{k=0}^{\lfloor \frac{n}{2}\rfloor-1}
\mu_1^{n-k-1} \binom{n-k -2}{k}.
\end{eqnarray*}
The binomial coefficient in the second inequality arises as follows: we think
of placing $n-k$ balls into $k$ urns, where there should be at least
one ball in each urn. If $k\leq \lfloor \frac{n}{2}\rfloor-1$ then $n-k -2 \geq \lfloor
\frac{n}{2}\rfloor-1$. This yields together with the Binomial Theorem:
\begin{eqnarray*}
\sigma_n & \geq &  \mu_1^{n-1}\sum_{k=0}^{\lfloor
  \frac{n}{2}\rfloor -1}
\mu_1^{-k} \binom{\lfloor
\frac{n}{2}\rfloor-1}{k}\\
&=& \mu_1^{n-1} \bigl( 1+\mu_1^{-1}\bigr)^{\lfloor
\frac{n}{2}\rfloor-1}.
\end{eqnarray*}
This gives
$$
\liminf_{n\to\infty} \sigma_n^{1/n}  \geq  \liminf_{n\to\infty}
\mu_1^{\frac{n-1}{n}} \sqrt{1+\mu_1^{-1}} = \mu_1 \cdot \sqrt{1+\mu_1^{-1}}>
\mu_1.
$$
This finishes the proof, and the only remaining possibility for
$R(\mathcal{M})$ is $z_\ast$. 
\end{proof}

Eventually, we proved the following theorem.
\begin{thm}\label{thm:connective constant}
The connective constant $\mu=\mu(G)$ of a free product $G$ of quasi-transitive graphs is given as $\mu=1/z_{\ast}$, where $z_{\ast}$ is the smallest real positive number with 
\begin{equation*}
\sum_{i\in\mathcal{I}}
\frac{\mathcal{M}_i(z_\ast)}{1+\mathcal{M}_i(z_\ast)}=1.
\end{equation*}
\end{thm}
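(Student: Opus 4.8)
The plan is to read the statement directly off the functional equation \eqref{equ:M-equation} and Lemma~\ref{lemma:z_ast}, since much of the analytic substance is already in place. Since $\mathcal{M}(z)=\sum_{n\geq 0}\sigma_n z^n$ has non-negative coefficients, the Cauchy--Hadamard formula gives $\limsup_{n\to\infty}\sigma_n^{1/n}=1/R(\mathcal{M})$; hence, once we know that the limit defining $\mu(G)$ exists, $\mu(G)=1/R(\mathcal{M})$, and it remains only to show $R(\mathcal{M})=z_\ast$.

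To identify $R(\mathcal{M})$ I would work on the positive real axis (equivalently, invoke Pringsheim's theorem, since $\mathcal{M}$ has non-negative coefficients). For real $z\in[0,\min_i R(\mathcal{M}_i))$ we have $\mathcal{M}_i(z)\geq 0$, so $1+\mathcal{M}_i(z)\geq 1$, and each map $z\mapsto \mathcal{M}_i(z)/(1+\mathcal{M}_i(z))$ is continuous, non-decreasing and vanishes at $z=0$; thus $\sum_{i\in\mathcal{I}}\mathcal{M}_i(z)/(1+\mathcal{M}_i(z))$ increases continuously from $0$, and the value of $\mathcal{M}(z)$ prescribed by \eqref{equ:M-equation} stays finite exactly until this sum first reaches $1$, i.e.\ until $z=z_\ast$, provided $z_\ast<\min_i R(\mathcal{M}_i)$; otherwise the blow-up occurs at some $R(\mathcal{M}_i)$. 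Either way $R(\mathcal{M})=\min\{R(\mathcal{M}_1),\dots,R(\mathcal{M}_r),z_\ast\}$, and Lemma~\ref{lemma:z_ast} asserts precisely that this minimum equals $z_\ast$ (and, along the way, that $z_\ast$ exists and lies in $(0,1]$). Combining, $\mu(G)=1/R(\mathcal{M})=1/z_\ast$.

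The one point requiring genuine care is the well-definedness of $\mu(G)$, i.e.\ that $\limsup_{n\to\infty}\sigma_n^{1/n}$ is actually a limit. For quasi-transitive $G$ this follows from subadditivity of $n\mapsto\log\sigma_n$ (Hammersley \cite{Ha:57}); more robustly, one can observe that $z_\ast$ is a simple zero of the denominator in \eqref{equ:M-equation}, so $\mathcal{M}$ has a simple pole at $z_\ast$, whence $\sigma_n\sim A\,\mu(G)^n$ for a constant $A$ (this is Theorem~\ref{thm:convergence-type}) and the limit exists a fortiori. I expect this verification to be the main obstacle, and it is a mild one; granted Lemma~\ref{lemma:z_ast} and \eqref{equ:M-equation}, the remainder is routine.
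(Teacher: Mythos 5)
Your proposal is correct and follows essentially the same route as the paper: the theorem is obtained by combining the functional equation \eqref{equ:M-equation}, Pringsheim's theorem (which locates the dominant singularity of $\mathcal{M}$ on the positive real axis, so that $R(\mathcal{M})=\min\{R(\mathcal{M}_1),\dots,R(\mathcal{M}_r),z_\ast\}$), and Lemma~\ref{lemma:z_ast}, which identifies this minimum as $z_\ast$; the existence of the limit defining $\mu(G)$ is, as you note, Hammersley's subadditivity result. Your extra remarks on well-definedness and monotonicity of $\sum_i \mathcal{M}_i(z)/(1+\mathcal{M}_i(z))$ are consistent with, and no more than a slight elaboration of, the paper's own discussion preceding the theorem.
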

Finally, we consider the special case of free products of
\textit{finite} graphs.
If all single graphs $G_i$ are finite (that is, $V_i$ is finite), then
the generating functions $\mathcal{M}_i(z)$ are polynomials with positive
coefficients of degree less or
equal than $|V_i|-1$. We then have that $z_\ast$ is the smallest positive zero
of the polynomial
\begin{equation}\label{equ:denom-finite-case}
D(z)=\prod_{k\in\calI} \bigr(1+\calM_k(z)\bigr) - \sum_{i\in\calI} \calM_i(z)\cdot
\Bigl(\prod_{j\in\calI \setminus \{i\}} \bigr(1+\calM_j(z)\bigr) \Bigr).
\end{equation}
In the case $r=2$ we obtain that $z_\ast$ is the smallest positive zero
of the polynomial $1-\calM_1(z)\calM_2(z)$. This proves the following:
\begin{cor}\label{cor:algebraic}
If all factors $G_i$ are finite graphs, then $\mu$ is algebraic.
\end{cor}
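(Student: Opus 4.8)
The plan is to read off algebraicity directly from the characterisation of $z_\ast$ obtained in the lines preceding the statement; the substantive work — identifying $\mu$ with $1/z_\ast$ and locating $z_\ast$ as the least positive zero of a concrete function — is already contained in Theorem~\ref{thm:connective constant} and Lemma~\ref{lemma:z_ast}, so all that remains is to observe that in the finite case this function is a nonzero polynomial with rational coefficients.

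First I would record that finiteness of $G_i$ forces $\mathcal{M}_i$ to be a polynomial over $\mathbb{Z}$: a self-avoiding walk on $G_i$ visits each of the $|V_i|$ vertices at most once, so $\sigma_n^{(i)}=0$ for $n\ge |V_i|$ and $\mathcal{M}_i(z)=\sum_{n=1}^{|V_i|-1}\sigma_n^{(i)}z^n \in \mathbb{Z}[z]$, with in particular $\mathcal{M}_i(0)=0$. Since $\mathbb{Z}[z]$ is closed under sums and products, the expression $D(z)$ of \eqref{equ:denom-finite-case} then lies in $\mathbb{Z}[z]$, and it is not the zero polynomial because $D(0)=\prod_{k\in\mathcal{I}}1-\sum_{i\in\mathcal{I}}0=1\neq 0$. (For $r=2$ one can equally well work with $1-\mathcal{M}_1(z)\mathcal{M}_2(z)\in\mathbb{Z}[z]$.)

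Next I would invoke the preceding discussion, which tells us that $z_\ast$ is a positive — and, since $R(\mathcal{M})\le 1$, finite — zero of $D$. Hence $z_\ast$ is a root of a nonzero polynomial with rational coefficients, so it is an algebraic number, and it is nonzero because it is positive; as the algebraic numbers form a field, $\mu=1/z_\ast$ is algebraic, which is the claim. I do not expect a genuine obstacle here: the only point deserving a word of care is that $z_\ast$ really does occur among the zeros of $D$ and is strictly positive and finite, and this is precisely what Lemma~\ref{lemma:z_ast} together with Theorem~\ref{thm:connective constant} supply. It is perhaps worth remarking, as a by-product, that the argument is effective — $\mu$ is the reciprocal of the least positive root of an explicitly computable integer polynomial — which is how the numerical and closed-form values in Figure~\ref{fig:examples} are obtained.
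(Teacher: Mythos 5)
Your proposal is correct and follows essentially the same route as the paper: the $\mathcal{M}_i$ are integer polynomials when the $G_i$ are finite, so $z_\ast$ is the smallest positive zero of the integer polynomial $D(z)$ of \eqref{equ:denom-finite-case} (or of $1-\mathcal{M}_1\mathcal{M}_2$ when $r=2$), hence algebraic, and $\mu=1/z_\ast$ is algebraic. Your extra checks that $D\not\equiv 0$ (via $D(0)=1$) and that $z_\ast>0$ is finite are small but welcome additions to the paper's terser argument.
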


\section{The critical exponent $\gamma$}

In this section we want to show that the critical exponent $\gamma=1$. We say that two functions $f(n),g(n)$ satisfy $f(n)\sim g(n)$, if $f(n)/g(n)\to 1$ as $n\to\infty$.
\begin{thm}\label{thm:convergence-type}
Let  $\sigma_{n}$ be the number of self-avoiding walks of length $n$ on a free product $G$ of quasi-transitive graphs and let $\mu(G)$ be the connective constant of $G$. Then, there exists some constant $A_{G}$ such that
\begin{equation*}
\sigma_n \sim A_{G}\mu(G)^n.
\end{equation*}
\end{thm}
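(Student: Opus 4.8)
The plan is to read off the asymptotics of $\sigma_n=[z^n]\calM(z)$ from the singularity structure of $\calM$, using the functional equation (\ref{equ:M-equation}) together with Lemma~\ref{lemma:z_ast}. Write $R_{\min}:=\min_{i\in\calI}R(\calM_i)$, so $z_\ast<R_{\min}$ by Lemma~\ref{lemma:z_ast}. Clearing denominators in (\ref{equ:M-equation}) gives $\calM(z)=N(z)/D(z)$ with $N(z):=\prod_{k\in\calI}\bigl(1+\calM_k(z)\bigr)$ and $D(z)$ the function from (\ref{equ:denom-finite-case}), now read as analytic functions rather than polynomials; both are analytic on $\{|z|<R_{\min}\}$, so $\calM$ is meromorphic there. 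I would first check that $z_\ast$ is a \emph{simple} pole: with $F(z):=\sum_{i\in\calI}\calM_i(z)/(1+\calM_i(z))$ one has $D=N\cdot(1-F)$ near $z_\ast$, and for real $z\in[0,z_\ast]$ every $\calM_i(z)\ge0$, so $N(z)\ge1$; moreover $\calM_i'(z)\ge\sigma_1^{(i)}=\deg_{G_i}(o_i)\ge1$ and $w\mapsto w/(1+w)$ is strictly increasing, so $F'(z)>0$ on $(0,z_\ast]$. Since $F(z_\ast)=1$ this yields $D'(z_\ast)=-N(z_\ast)F'(z_\ast)\neq0$, hence $\calM(z)=\frac{A_G}{1-z/z_\ast}+g(z)$ near $z_\ast$, where $A_G:=1/\bigl(z_\ast F'(z_\ast)\bigr)>0$ and $g$ is analytic at $z_\ast$.

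Next I would show that $z_\ast$ is the \emph{only} singularity of $\calM$ on $\{|z|=z_\ast\}$. Since $|V_i|\ge2$ for all $i$ and $r\ge2$, the free product $G$ is infinite, connected and locally finite, so for every $n$ there is a vertex at graph distance $n$ from $o$, and a geodesic to it is a SAW of length $n$; thus $\sigma_n\ge1$ for all $n$ and the support of $\calM$ is aperiodic. By the classical fact that a power series with non-negative coefficients and aperiodic support that is meromorphic in a disc strictly larger than its disc of convergence has its radius of convergence as the unique singularity on the boundary circle -- which can be obtained directly by comparing $\calM(tz_\ast)$ with $|\calM(tz_0)|$ as $t\uparrow1$ for $|z_0|=z_\ast$, $z_0\neq z_\ast$ -- we conclude that $z_\ast$ is the only pole of $\calM$ on $\{|z|=z_\ast\}$, while $\calM$ is analytic on $\{|z|<z_\ast\}$ because $R(\calM)=z_\ast$. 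Consequently there is $\rho\in(z_\ast,R_{\min}]$ such that $h(z):=\calM(z)-A_G/(1-z/z_\ast)$ is analytic on $\{|z|<\rho\}$, whence $[z^n]h(z)=O(\rho_0^{-n})$ for any $\rho_0\in(z_\ast,\rho)$, and therefore $\sigma_n=A_G z_\ast^{-n}+O(\rho_0^{-n})=A_G\,\mu(G)^n+O(\rho_0^{-n})\sim A_G\,\mu(G)^n$, using $\mu(G)=1/z_\ast$ from Theorem~\ref{thm:connective constant}.

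The main obstacle is the uniqueness of the dominant singularity. The inequality $\sigma_n\ge1$ settles aperiodicity, but one still has to handle the degenerate case that $1+\calM_i(z_0)=0$ for some $i$ and some $z_0$ with $|z_0|=z_\ast$: there $N(z_0)=0$, so a zero of $D$ at $z_0$ need not be a pole of $\calM=N/D$, and one must check that the orders match so that $z_0$ is removable; working with the power series $\calM$ and its non-negative coefficients directly, rather than with $N$ and $D$, sidesteps this. A second, more bookkeeping-type point is to make the growth-along-rays comparison precise (or to pin down a clean reference for the dominant-singularity statement). The remaining ingredients -- the meromorphic continuation, the simplicity of the pole at $z_\ast$, and the final transfer step -- are routine given Lemma~\ref{lemma:z_ast} and (\ref{equ:M-equation}).
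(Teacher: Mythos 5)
Your decomposition $\calM=N/D$, the computation showing $D'(z_\ast)=-N(z_\ast)F'(z_\ast)\neq 0$ (hence a simple pole, with the correct constant $A_G=1/(z_\ast F'(z_\ast))$), and the final transfer step by subtracting the principal part are all sound and match the paper's Lemma \ref{lemma:N/D} and the use of singularity analysis. The gap is in the step you yourself flag as the main obstacle: the ``classical fact'' you invoke is false. A power series with non-negative coefficients and aperiodic support, meromorphic in a strictly larger disc, can have several poles on its circle of convergence: take $f(z)=\frac{1}{1-z}+\frac{1}{1-z^2}$, whose coefficients are $2,1,2,1,\dots$ (all positive, support of gcd $1$), yet which has simple poles at both $z=1$ and $z=-1$. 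The ray comparison $|\calM(tz_0)|\leq \calM(t z_\ast)$ only shows that the pole order at any $z_0$ with $|z_0|=z_\ast$ is at most the order at $z_\ast$; it cannot exclude a second pole of equal order, as the example shows. So positivity and aperiodicity of $\calM$ itself do not suffice, and this step is not mere bookkeeping.

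The paper's Lemma \ref{lemma:no-further-singularity} closes exactly this hole by applying the positivity/extremal-point (daffodil) argument not to $\calM$ but to $S(z):=1-D(z)$, which (i) has non-negative Taylor coefficients at $0$, being a positive combination of products $\prod_{k\in T}\calM_k(z)$ over subsets $T\subseteq\calI$ with $|T|\geq 2$, and (ii) is analytic \emph{past} $z_\ast$ because $z_\ast<\min_i R(\calM_i)$ by Lemma \ref{lemma:z_ast}, so that $S(z_\ast)=1$ is an absolutely convergent sum and $S(z_\ast e^{i\theta})$ is a genuine convex combination of unimodular numbers; extremality of $1$ plus aperiodicity of the support of $S$ then forces $\theta=0$, so $z_\ast$ is the only zero of $D$ on the circle. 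Note that you cannot instead run this argument on $F(z)=\sum_i\calM_i(z)/(1+\calM_i(z))$, since $\calM_i/(1+\calM_i)$ generally has negative coefficients (e.g.\ $z/(1+z)=z-z^2+\cdots$); the specific expansion of $D$ is what produces an auxiliary function with non-negative coefficients to which positivity applies. With that lemma in place, your remaining steps (handling possible zeros of $N$ via removable singularities, and the elementary meromorphic transfer) go through.
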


In order to  prove the theorem we make some further observations.
We begin by recalling  that Equation (\ref{equ:M-equation}) is valid for all
$z\in\mathbb{C}$ with $|z|<z_\ast$ and $1+\mathcal{M}_i(z)\neq 0$ for all
$i\in\calI$. By multiplication of (\ref{equ:M-equation}) with $\prod_{i\in\calI}
\bigl(1+\mathcal{M}_i(z)\bigr)$ we can rewrite $\calM(z)$ as 
$$
\calM(z) = \frac{N(z)}{D(z)},
$$
where $N(z):=\prod_{i\in\calI} \bigl(1+\mathcal{M}_i(z)\bigr)$ and
$$
D(z):=\prod_{k\in\calI} \bigr(1+\calM_k(z)\bigr) - \sum_{i\in\calI} \calM_i(z)\cdot
      \prod_{j\in\calI \setminus \{i\}} \bigr(1+\calM_j(z)\bigr).
$$
\begin{lem}\label{lemma:N/D}
There exists some function $g(z)$ that is analytic in a neighbourhood of $z=z_\ast$  such that  $g(z_\ast)\neq 0$ and 
$$
\frac{N(z)}{D(z)}=\frac{g(z)}{z-z_\ast}.
$$
\end{lem}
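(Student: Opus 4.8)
The plan is to show that $z=z_\ast$ is a simple zero of the denominator $D(z)$ while the numerator $N(z)$ does not vanish there, so that the quotient $N(z)/D(z)$ has a simple pole at $z_\ast$ and can therefore be written as $g(z)/(z-z_\ast)$ with $g$ analytic and nonvanishing near $z_\ast$. First I would record that by Lemma \ref{lemma:z_ast} we have $z_\ast<\min\{R(\mathcal M_1),\dots,R(\mathcal M_r)\}$, so each $\mathcal M_i(z)$ — and hence $N(z)$ and $D(z)$ — is analytic in a disc of radius strictly larger than $z_\ast$. Since the $\mathcal M_i$ have nonnegative coefficients and $\mathcal M_i(z)\ge 0$ for $z\in[0,z_\ast]$, the product $N(z_\ast)=\prod_{i\in\calI}(1+\mathcal M_i(z_\ast))\ge 1>0$, so $N$ is nonzero at $z_\ast$ (and in a neighbourhood, by continuity). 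Also $D(z_\ast)=0$ by the defining equation of $z_\ast$: dividing the displayed formula for $D$ by $N(z_\ast)\ne 0$ gives exactly $1-\sum_{i\in\calI}\mathcal M_i(z_\ast)/(1+\mathcal M_i(z_\ast))=0$.

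The crux is to prove that the zero of $D$ at $z_\ast$ is simple, i.e. $D'(z_\ast)\neq 0$. The clean way is to work with the function
$$
F(z):=\sum_{i\in\calI}\frac{\mathcal M_i(z)}{1+\mathcal M_i(z)},
$$
which is analytic on $[0,z_\ast]$ (the denominators $1+\mathcal M_i(z)\ge 1$ never vanish there), satisfies $F(0)=0$ and $F(z_\ast)=1$, and has $D(z)=N(z)\bigl(1-F(z)\bigr)$. Thus $D'(z_\ast)=-N(z_\ast)F'(z_\ast)$, and it suffices to show $F'(z_\ast)>0$. Now each summand $\mathcal M_i(z)/(1+\mathcal M_i(z))$ is a strictly increasing function of $z$ on $[0,z_\ast]$: indeed its derivative equals $\mathcal M_i'(z)/(1+\mathcal M_i(z))^2$, and $\mathcal M_i'(z)=\sum_{n\ge 1} n\,\sigma_n^{(i)} z^{n-1}$ is strictly positive for $z\in(0,z_\ast]$ because each $G_i$ has at least one edge (as $|V_i|\ge 2$ and $G_i$ is connected), so $\sigma_1^{(i)}\ge 1$. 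Hence $F'(z_\ast)=\sum_{i\in\calI}\mathcal M_i'(z_\ast)/(1+\mathcal M_i(z_\ast))^2>0$, which gives $D'(z_\ast)=-N(z_\ast)F'(z_\ast)\ne 0$. (One must only check $\mathcal M_i'(z_\ast)$ is finite, which holds since $z_\ast$ is strictly inside the disc of convergence of $\mathcal M_i$ by Lemma \ref{lemma:z_ast}.)

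With a simple zero of $D$ at $z_\ast$ and $N(z_\ast)\ne 0$ in hand, I would finish as follows. Write $D(z)=(z-z_\ast)h(z)$ where $h(z):=D(z)/(z-z_\ast)$ extends analytically across $z_\ast$ with $h(z_\ast)=D'(z_\ast)\ne 0$; this is the standard fact that an analytic function with a simple zero factors off the linear term with an analytic, nonvanishing cofactor in a neighbourhood. Then
$$
\frac{N(z)}{D(z)}=\frac{N(z)/h(z)}{z-z_\ast},
$$
so setting $g(z):=N(z)/h(z)$ does the job: $g$ is a ratio of functions analytic near $z_\ast$ with nonvanishing denominator, hence analytic in a neighbourhood of $z_\ast$, and $g(z_\ast)=N(z_\ast)/D'(z_\ast)\ne 0$. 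The main obstacle is the simplicity of the zero, and as sketched above it reduces entirely to the monotonicity of $F$, which is where the hypotheses $|V_i|\ge 2$ and connectedness of the factors (giving $\mathcal M_i'>0$) are used; the rest is bookkeeping.
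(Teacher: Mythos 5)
Your proof is correct and follows essentially the same route as the paper: both establish that $z_\ast$ is a simple zero of $D$ by showing $D'(z_\ast)\neq 0$ via strict positivity of $\mathcal{M}_i(z_\ast)$ and $\mathcal{M}_i'(z_\ast)$, and then factor off the linear term. Your computation $D'(z_\ast)=-N(z_\ast)F'(z_\ast)$ via the identity $D=N\cdot(1-F)$ is just a slightly tidier packaging of the cancellation that the paper obtains by differentiating the expanded form of $D$ directly, and you additionally make explicit the (easy but needed) facts that $N(z_\ast)\neq 0$ and $\sigma_1^{(i)}\geq 1$.
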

\begin{proof}
Since $z_\ast < \min\{R(\calM_1),\dots,R(\calM_r)\}$ we can expand each
$\calM_i(z)$ around $z_\ast$ as a Taylor series of the form
$$
\calM_i(z) = \sum_{n\geq 0} a_n^{(i)} (z-z_\ast)^n.
$$
In particular, $N(z)$ and $D(z)$ are analytic in a neighbourhood of $z=z_\ast$.
Plugging this expansion into $D(z)$ we must have due to $D(z_\ast)=0$ that
$D(z)$ can be rewritten as 
\begin{equation}\label{equ:denom}
D(z)=\sum_{n\geq 1} d_n (z-z_\ast)^n.
\end{equation}
We now claim that $z_\ast$ is a single zero of
(\ref{equ:denom}), that is, we have $d_1\neq 0$: the derivative of (\ref{equ:denom})
is
\begin{eqnarray*}
&&\sum_{i\in\calI} \calM_i'(z) \prod_{k\in\calI\setminus\{i\}}
\bigr(1+\calM_k(z)\bigr) - \sum_{i\in\calI} \calM_i'(z)\cdot
\prod_{j\in\calI \setminus \{i\}} \bigr(1+\calM_j(z)\bigr)\\
&&\quad\quad
- \sum_{i\in\calI} \calM_i(z) \sum_{k\in\calI\setminus\{i\}} \calM_k'(z)
\prod_{l\in\calI \setminus \{i,k\}} \bigr(1+\calM_l(z)\bigr)\\
&=&- \sum_{i\in\calI} \calM_i(z) \sum_{k\in\calI\setminus\{i\}} \calM_k'(z)
\prod_{l\in\calI \setminus \{i,k\}} \bigr(1+\calM_l(z)\bigr).
\end{eqnarray*}
Since $\calM_i(z_\ast),\calM_i'(z_\ast)>0$ for all $i\in\calI$ the derivative
is strictly negative for real $z>0$. Consequently, $D'(z_\ast)\neq 0$ and therefore $z_\ast$
is a single zero of $D(z)$. Since $z_\ast$ is the smallest positive zero of $D(z)$ we
can write $D(z)=(z-z_\ast) \cdot d(z)$, where $d(z)$ is analytic in a
neighbourhood of $z=z_\ast$ with $d(z_\ast)\neq 0$. Setting $g(z):=N(z)\cdot
d(z)^{-1}$ yields the claim since $g(z)$ is a function with $g(z_\ast)\neq 0$, which is analytic in a neighbourhood of
$z_\ast$. \qed
\end{proof}
We now claim that $z_\ast$ is the unique dominant singularity of
$N(z)/D(z)$. 
\begin{lem}\label{lemma:no-further-singularity}
The only singularity of $N(z)/D(z)$ on its circle of convergence (of radius $z_\ast$) is  $z_\ast$.
\end{lem}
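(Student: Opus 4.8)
The plan is to show that no point $z_0$ with $|z_0| = z_\ast$ other than $z_\ast$ itself can be a singularity of $N(z)/D(z)$. Since $N(z)$ and $D(z)$ are analytic in the open disc of radius $\min\{R(\calM_1),\dots,R(\calM_r)\}$, which strictly exceeds $z_\ast$ by Lemma \ref{lemma:z_ast}, the only possible singularities of the quotient on $|z| = z_\ast$ are zeros of $D(z)$. So it suffices to prove that $D(z_0) \neq 0$ for every $z_0 \neq z_\ast$ with $|z_0| = z_\ast$.

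The key step is a triangle-inequality argument exploiting that all the $\calM_i$ have nonnegative Taylor coefficients. Writing out
$$
D(z) = \prod_{k\in\calI}\bigl(1+\calM_k(z)\bigr) - \sum_{i\in\calI}\calM_i(z)\prod_{j\in\calI\setminus\{i\}}\bigl(1+\calM_j(z)\bigr),
$$
dividing by $\prod_{k}\bigl(1+\calM_k(z)\bigr)$ (which is nonzero for $|z| < R(\calM_i)$ when $z$ is real positive; I will need to check it stays nonzero on the circle, or argue directly with $D$), the condition $D(z_0)=0$ becomes $\sum_{i\in\calI}\frac{\calM_i(z_0)}{1+\calM_i(z_0)} = 1$. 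The cleaner route is to avoid the division: I would argue that since $\calM_i(z) = \sum_{n\geq 1}\sigma_n^{(i)}z^n$ with $\sigma_n^{(i)}\geq 0$, for $|z_0| = z_\ast$ we have $|\calM_i(z_0)| \leq \calM_i(z_\ast)$, with equality for a given $i$ (with $\calM_i$ not identically the zero behavior) forcing $z_0 = z_\ast$ whenever $\calM_i$ has two nonzero coefficients whose indices are coprime — and in general the standard aperiodicity argument applies because each $G_i$ has $|V_i|\geq 2$ and is connected, so $\sigma_1^{(i)} \geq 1$ and $\sigma_2^{(i)}\geq 1$, giving consecutive (hence coprime) exponents. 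Then $\bigl|\frac{\calM_i(z_0)}{1+\calM_i(z_0)}\bigr|$ is strictly less than $\frac{\calM_i(z_\ast)}{1+\calM_i(z_\ast)}$ for at least one $i$ and at most equal for the others, so
$$
\Bigl|\sum_{i\in\calI}\frac{\calM_i(z_0)}{1+\calM_i(z_0)}\Bigr| < \sum_{i\in\calI}\frac{\calM_i(z_\ast)}{1+\calM_i(z_\ast)} = 1,
$$
which rules out $D(z_0) = 0$.

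Two technical points need care. First, I must ensure $1 + \calM_i(z_0) \neq 0$ on the circle so the quotient $\frac{\calM_i(z_0)}{1+\calM_i(z_0)}$ makes sense and the map $w\mapsto \frac{w}{1+w}$ can be applied; since $\calM_i(z_\ast) > 0$ and $|\calM_i(z_0)|\leq\calM_i(z_\ast)$ this is not automatic, so I would instead work directly with the identity $D(z) = \prod_k(1+\calM_k(z)) \cdot\bigl(1 - \sum_i \frac{\calM_i(z)}{1+\calM_i(z)}\bigr)$ only where the denominators are nonzero, and handle the locus $1 + \calM_i(z_0) = 0$ separately — but there $|\calM_i(z_0)| = 1 \leq \calM_i(z_\ast)$, and one checks $\calM_i(z_\ast) > 1$ would be needed or one massages $D$ directly. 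The monotonicity fact $\calM_i(z_\ast) > 0$ already gives room; the cleanest fix is to note $\frac{|w|}{|1+w|}$ need not be bounded by $\frac{\calM_i(z_\ast)}{1+\calM_i(z_\ast)}$ in general, so I will instead bound $|\calM_i^\ast(z_0)| \leq \calM_i^\ast(z_\ast)$ using the series representation $\calM_i^\ast(z) = \sum_n \bar\sigma_n^{(i)} z^n$ with nonnegative coefficients, together with Equation (\ref{equ:M}) and the fact that $\calM(z_\ast)$ is a genuine pole. Second, the strict inequality requires the aperiodicity/coprimality input; the main obstacle is packaging this so it covers the quasi-transitive (not merely transitive) case and the possibility that some $\calM_i$ might a priori be "periodic" — but connectedness with $|V_i| \geq 2$ guarantees $\sigma_1^{(i)}, \sigma_2^{(i)} \geq 1$, so exponents $1$ and $2$ both occur, and $\gcd(1,2) = 1$ kills periodicity. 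I expect the bookkeeping around which series to apply the triangle inequality to (the $\calM_i$, the $\calM_i^\ast$, or $\calM$ itself) to be where the real work lies.
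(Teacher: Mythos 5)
Your reduction to showing $D(z_0)\neq 0$ for every $z_0$ with $|z_0|=z_\ast$ and $z_0\neq z_\ast$ is correct (by Lemma \ref{lemma:z_ast} both $N$ and $D$ are analytic on a disc of radius strictly larger than $z_\ast$), and the intended mechanism --- nonnegative Taylor coefficients, triangle inequality, aperiodicity --- is indeed the right one. But the proposal never produces a series to which that mechanism actually applies, and both candidates you name fail. You correctly observe that $|w|/|1+w|$ is not monotone in $|w|$, so the bound $|\calM_i(z_0)|\le\calM_i(z_\ast)$ does not transfer to $\calM_i/(1+\calM_i)$. Your fallback, bounding $|\calM_i^\ast(z_0)|\le\calM_i^\ast(z_\ast)$, is vacuous: since $\calM_i^\ast(z)=\calM(z)\,\calM_i(z)/(1+\calM_i(z))$ and $\calM$ blows up at $z_\ast$, one has $\calM_i^\ast(z_\ast)=+\infty$. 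Moreover, the aperiodicity input is wrong as stated: for $G_i=K_2$ we have $\sigma_2^{(i)}=0$, so the support of $\calM_i$ is $\{1\}$ and $|\calM_i(z_0)|=\calM_i(z_\ast)$ for \emph{every} $z_0$ on the circle; if all factors are $K_2$ (so $G$ is a regular tree) no individual factor ever yields a strict inequality, yet the lemma still holds there.

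The missing idea --- and the paper's actual argument --- is to expand the products and exploit the cancellation $D(z)=1-S(z)$, where $S(z)=\sum_{T\subseteq\calI,\,|T|\ge 2}(|T|-1)\prod_{i\in T}\calM_i(z)=\sum_{n\ge 1}s_n z^n$ is a \emph{single} power series with nonnegative coefficients satisfying $S(z_\ast)=1$ (because $D(z_\ast)=0$). Then $S(z_\ast e^{i\theta})=\sum_n (s_n z_\ast^n)\,e^{in\theta}$ is a convex combination of points of the closed unit disc; since $1$ is an extreme point of that disc, $S(z_\ast e^{i\theta})=1$ forces $e^{in\theta}=1$ for all $n$ in the support of $S$, and the coprimality of exponents in that support (coming from the cross-terms $\calM_i\calM_j$, which is where the gcd condition must be verified) forces $\theta=0$. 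In short, the triangle-inequality/aperiodicity argument must be run on $1-D$ itself rather than on the $\calM_i$, the $\calM_i/(1+\calM_i)$, or the $\calM_i^\ast$; you identified the right obstacle but did not supply the series that overcomes it.
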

\begin{proof}
Expanding $D(z)$ yields that we can rewrite $D(z)=1-S(z)$, where $S(z)$ has
the form
$$
S(z)=\sum_{k=2}^r \sum_{1=i_1<i_2<\dots <i_k=r} a_{i_1,\dots,i_k} \mathcal{M}_{i_1}(z)\dots
\mathcal{M}_{i_k}(z)=\sum_{n\geq 1} s_n z^n,
$$
where $a_{i_1,\dots,i_k}\geq 0$ and $s_n>0$ for all $n\in\mathbb{N}$. In
particular, $S(z_\ast)=1$. It is sufficient to show that there is no
$z_0\in\mathbb{C}$ with $|z_0|=z_\ast$ and $S(z_0)=1$. For this purpose,
let be $\theta\in[0,2\pi)$ and consider
$$
1-S(e^{i\theta} z_\ast) = 1- \sum_{n\geq 1} s_n z_\ast^n e^{i\theta n}.
$$
Observe that the elements $e^{i\theta n}$ are points on the (complex) unit circle and
that $S(e^{i\theta} z_\ast)$ is a convex combination of these points since $S(z_\ast)=1$. Since $1$
is an extremal point of the convex unit disc and since the greatest common
divisor of the $s_n$'s equals $1$, we must have that $\theta=0$, which yields
the claim. 
\end{proof}
Now we can proceed with the proof of Theorem \ref{thm:convergence-type} which
uses the modern tool of Singularity Analysis developed by Flajolet and
Sedgewick \cite[Chapter VI]{FlSe:09}.
\begin{proof}[Proof of Theorem \ref{thm:convergence-type}.]
Recall that the formula for $\mathcal{M}(z)$ given by Equation
(\ref{equ:M-equation}) is only valid for all $z\in\mathbb{C}$ with
$|z|<z_\ast$, while the quotient $N(z)/D(z)$ is analytic in some domain of the
form
$$
\Delta=\{z\in\mathbb{C} \mid |z|<R', z\neq z_\ast, |\mathrm{arg}(z-z_\ast)|>\varphi\}
$$
for some $R'>z_\ast$ and some $\varphi\in (0,\pi/2)$ due to Lemma
\ref{lemma:no-further-singularity}; compare also with 
\cite[Definition VI.1]{FlSe:09}. Thus, $N(z)/D(z)$ is an analytic
continuation of $\mathcal{M}(z)$ to the domain $\Delta$. This is because, if
there is some $z_0\in\mathbb{C}\setminus \{z_\ast\}$ with $|z_0|=z_\ast$ and
$1+\mathcal{M}_i(z_0)=0$ for some $i\in\calI$ then $\mathcal{M}(z)\to 0$ as $z\to
z_0$, and by Riemann's theorem on removable singularities we obtain that
$N(z)/D(z)$ is an analytic representation of $\mathcal{M}(z)$ also in a
neighbourhood of $z=z_0$, yielding that $z_0$ is not singular.
%
%
In view of Lemma \ref{lemma:no-further-singularity} and the above discussion $z_\ast$ is the only
singularity of $\mathcal{M}(z)$ on the circle with radius $z_\ast$. With this
observation and with Lemma \ref{lemma:N/D} 
we can rewrite $N(z)/D(z)$ as
$$
\mathcal{M}(z) = \frac{a_0}{z-z_\ast} + \sum_{n\geq 1} a_n (z-z_\ast)^{n-1},
$$
where $g(z)=\sum_{n\geq 0} a_n (z-z_\ast)^n$. 
 By  \cite[Chapter VI.2]{FlSe:09}, the singular term \mbox{$(z-z_\ast)^{-1}$} leads to the
proposed form for the coefficients $\sigma_n$. 
\end{proof}

\section{Examples}

In this section we collect some examples with explicit calculations for free
products of \textit{finite} and \textit{infinite} graphs. Let $K_{n}$ be the complete graph of $n$ vertices.

\begin{example}
Consider the free product of $K_{2} \ast K_{3}$, see Figure \ref{fig:examples}(a).  Then 
$$
\calM_1(z)=z, \quad \calM_2(z)=2z+2z^2.
$$
The smallest positive zero of $1-\calM_1(z)\calM_2(z)=1-2z^2-2z^3$ is given by
$$
z_\ast=\frac{1}{6} \biggl(-2 + \sqrt[3]{46 - 6\sqrt{57}} + \sqrt[3]{46 + 6
  \sqrt{57})}\biggr)\approx 0.565198.
$$
Eventually, $\mu=1/z_\ast \approx 1.76929$.
\end{example}
\begin{example}
Consider $K_{2} \ast K_{3}\ast K_4$. Then 
$$
\calM_1(z)=z,\quad \calM_2(z)=2z+2z^2, \quad \calM_3(z)=3z+6z^2+6z^3.
$$
The smallest positive zero of the polynomial
$$
\prod_{k=1}^3 \bigr(1+\calM_k(z)\bigr) - \sum_{i=1}^3 \calM_i(z)\cdot
\Bigl(\prod_{ j\neq i} \bigr(1+\calM_j(z)\bigr) \Bigr).
$$
is given by $z_\ast \approx 0.210631$, which is the smallest positive zero
 of the polynomial $24z^6+60z^5+66z^4+38z^3+11z^2-1$.
That is, $\mu=1/z_\ast \approx 4.74763$.
\end{example}

\begin{example}
Let $C_{n}$ be the cycle graph with $n$ vertices.
Consider free products of the form $C_{2}\ast
C_{n}$ for $n\geq 3$. Then $\mathcal{M}_1(z)=z$ and
$\mathcal{M}_2(z)=\sum_{k=1}^{n-1} 2z^k$. The following table shows the approximations
of $\mu$ for different $n$:
$$
\begin{array}{|c||c|c|c|c|c|c|c|c|}
\hline n  &4 &5 &6 &7 &8  &9 & 10 &\infty \\
\hline
\mu  & 1.89932 & 1.95350 & 1.97781 & 1.98920 & 1.99468 & 1.99737 & 1.99869 & 2 \\
\hline
\end{array}
$$
Here, we set $C_{\infty}$ to be the one-dimensional Euclidean lattice. The free product $C_{2}\ast C_{\infty}$ is then the regular tree of degree $3$ and hence  $\mu=2$.
\end{example}

\begin{example}
Consider the following infinite graph $G_1$:\\

\begin{center}
\begin{tikzpicture}[scale = 0.5]
\begin{scope}[shift={(0,0)},rotate=0]
\draw[black] (0,0) -- (1,1) -- (2,0) -- (1,-1) -- (0,0); 
\end{scope}
\begin{scope}[shift={(2,0)},rotate=0]
\draw[black] (0,0) -- (1,1) -- (2,0) -- (1,-1) -- (0,0); 
\end{scope}
\begin{scope}[shift={(4,0)},rotate=0]
\draw[black] (0,0) -- (1,1) -- (2,0) -- (1,-1) -- (0,0); 
\end{scope}
\begin{scope}[shift={(6,0)},rotate=0]
\draw[black] (0,0) -- (1,1) -- (2,0) -- (1,-1) -- (0,0); 
\end{scope}
\node (A) at (6.1,-0.5) {$o_{1}$}; 
\begin{scope}[shift={(8,0)},rotate=0]
\draw[black] (0,0) -- (1,1) -- (2,0) -- (1,-1) -- (0,0); 
\end{scope}
\begin{scope}[shift={(10,0)},rotate=0]
\draw[black] (0,0) -- (1,1) -- (2,0) -- (1,-1) -- (0,0); 
\end{scope}
\begin{scope}[shift={(12,0)},rotate=0]
\draw[black, dashed] (0,0) -- (0.5,0.5); 
\draw[black, dashed] (0,0) -- (0.5,-0.5); 
\end{scope}
\begin{scope}[shift={(0,0)},rotate=180]
\draw[black, dashed] (0,0) -- (0.5,0.5); 
\draw[black, dashed] (0,0) -- (0.5,-0.5); 
\end{scope}

\end{tikzpicture}
\end{center} We have $\sigma_1^{(1)}=4$. Let us consider SAW whose first step is to the right. Every SAW of even length   consists only of steps which pass through an edge from the left to the right. At every crossing we have two possibilities to extend some given SAW towards the right hand side. For SAW of odd length each SAW of length $n-1$ can be extended to the right by two ways, and by one edge back to the left.
By symmetry this yields $\sigma_1^{(2n)}=2\cdot 2^{n}$ and  $\sigma_1^{(2n+1)}=2\cdot 3\cdot 2^{n}$ for $n\geq 1$. Hence,
\begin{eqnarray*}
\mathcal{M}_1(z)= \sum_{n\geq 1} \sigma_1^{(n)}z^n &=& 4z+\sum_{n\geq 1} 2^{n+1}z^{2n} +
\sum_{n\geq 1} 3\cdot 2^{n+1}\cdot z^{2n+1} \\
&=& \frac{2+6z}{1-2z^2}-2z-2.
\end{eqnarray*}
That is, $R(\mathcal{M}_1)=\frac{1}{\sqrt{2}}$. Consider again the complete graph of
degree $4$ denoted by $K_4$ with
$$
\calM_2(z)=3z+6z^2+6z^3.
$$
The connective constant of the free product $G_1\ast K_4$ is then given as the solution
$z_\ast$ of $D(z)=0$ which can be calculated numerically:
$$
z_\ast = 0.203143.
$$
The connective constant is then given by
$$
\mu = \frac{1}{z_\ast} = 4.92264.
$$
\end{example}


%
%
%


\bibliographystyle{spmpsci}      
\bibliography{saw}   
\end{document}